\newtheorem{thm}{Theorem}[section]
\newtheorem{theorem}[thm]{Theorem}
\newtheorem{prop}[thm]{Proposition}
\newtheorem{cor}[thm]{Corollary}
\newtheorem{conj}[thm]{Conjecture}
\numberwithin{equation}{section}
\theoremstyle{remark}
\newcommand{\mat}[1]{\left(\begin{matrix} #1 \end{matrix} \right)}  
\newcommand{\la}{\langle}
\newcommand{\ra}{\rangle}
\newcommand{\ccr}{}
\begin{document}
\raggedbottom
\setlength\parindent{24pt}

\title{Logarithmic Diameter Bounds for Some Cayley Graphs}
\author{Lam Pham}
\address{Department of Mathematics, Brandeis University, MA, USA}
\email{lampham@brandeis.edu}

\author{Xin Zhang}
\address{Department of Mathematics, The University of Hong Kong, Hong Kong}
\email{xzhang@maths.hku.hk}

\begin{abstract} 
Let $S\subset\text{GL}_n(\mathbb Z)$ be a finite symmetric set. We show that if the Zariski closure of $\Gamma=\langle S\rangle$ is { a product of special linear groups} or {a special affine linear group}, then the diameter of the Cayley graph $\text{Cay}(\Gamma/\Gamma(q),\pi_q(S))$ is $O(\log q)$, where $q$ is an arbitrary positive integer, $\pi_q:\Gamma\to \Gamma/\Gamma(q)$ is the canonical projection induced by the reduction modulo $q$, and the implied constant depends only on $S$.

\end{abstract}

\maketitle

\section{Introduction}

Let $n\in\mathbb N$, and let $\Gamma=\langle S\rangle \subset \text{GL}_n(\mathbb Z)$ be a group generated by a finite symmetric generating set $S$. (This means $S^{-1}=S$.) For any $q\in\mathbb N$, let $\pi_q:\mathbb Z\to \mathbb Z/q\mathbb Z$ be the reduction modulo $q$. It induces a homomorphism $\text{GL}(n,\mathbb Z)\to \text{GL}(n,\mathbb Z/q\mathbb Z)$, which we still denote by  $\pi_q$. Consider the Cayley graphs
$$
X_q:=\text{Cay}(\pi_q(\Gamma),\pi_q(S)),\quad
q\in\mathbb N.
$$
Any probability measure $\mu$ on $X_q$ gives rise to a convolution operator
$$
T_\mu^{(q)}:\ell^2_0(X_q)\to \ell^2_0(X_q),\quad
f\mapsto \mu*f.
$$
Here, for any finite set $X$, we write $\ell^2(X)=\ell^2_0(X)\oplus \mathbb C$, i.e., $\ell^2_0(X_q)$ is the codimension one subspace orthogonal to the constants. Of particular interest is when $\mu=\frac{1}{|S_q|}\sum_{s\in S_q} \delta_s$, the uniform probability measure on $S_q=\pi_q(S)$. In this case, we will simply write $T_S^{(q)}$. 

The group $\Gamma$ is said to have \emph{super-approximation} (or Property $\tau$) with respect to a set $\mathcal C\subset \mathbb N$ if there exists $\epsilon>0$ such that
$$
\|T_S^{(q)}\|\leq 1-\epsilon,\quad \forall q\in \mathcal C.
$$
If $\mathcal C=\mathbb N$, $\Gamma$ is said to have \emph{super-approximation}.

It is well-known that an equivalent statement for $\Gamma$ to have super-approximation with respect to $\mathcal C$ is that the family of Cayley graphs $\{X_q\}_{q\in \mathcal C}$ is an \emph{expander family}, i.e., that there exists $\epsilon>0$ such that
$$
\min \left\{
\frac{|\partial U|}{|U|}
\vert U\subset X_q,~|U|\leq \frac{|X_q|}{2}
\right\}\geq \epsilon,\quad \forall q\in \mathcal C.
$$
Note that even though the constant $\epsilon>0$ depends in general on the choice of generators, the fact that $\Gamma$ has super-approximation does not. 



In \cite{BourgainGamburd2008}, Bourgain and Gamburd introduced a powerful technique, now known as the \emph{Bourgain-Gamburd expansion machine}, to prove super-approximation.  A crucial ingredient of \cite{BourgainGamburd2008} is Helfgott's Product Theroem \cite{Helfgott2008}, which admits later generalizations  \cite{BreuillardGreenTao2011,PyberSzabo2016}.  We record the following state-of-the-art results on super-approximation:

\begin{theorem}[\cite{BourgainVarju2012}]\label{BV2012}
Every Zariski-dense subgroup of $\text{SL}(d,\mathbb Z)$ has super-approximation.
\end{theorem}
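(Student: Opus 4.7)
My plan is to derive the theorem via the Bourgain--Gamburd expansion machine, whose three inputs are: (i) quasi-randomness of the quotients $\pi_q(\Gamma)$; (ii) a non-concentration estimate, namely that $\mu_S^{(k)}(H)$ is small for every proper subgroup $H \leq \pi_q(\Gamma)$ at a suitable scale $k = O(\log q)$, where $\mu_S$ is the uniform probability measure on $S$; and (iii) an $\ell^2$-flattening lemma driven by a noncommutative product theorem. I would first establish super-approximation for prime moduli $q = p$, then extend it to squarefree $q$, and finally to prime powers and arbitrary $q$.

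For prime $q = p$, the strong approximation theorem of Nori and Weisfeiler ensures that $\pi_p(\Gamma) = \text{SL}(d,\mathbb{F}_p)$ for all but finitely many $p$, which places us in the setting of finite simple groups of Lie type. Quasi-randomness is then supplied by the Landazuri--Seitz--Zalesski bounds: every non-trivial irreducible representation of $\text{SL}(d,\mathbb{F}_p)$ has dimension at least $c(d)\, p^{d-1}$. Non-concentration is established by an escape-from-subvarieties argument: Zariski-density of $\Gamma$ together with the classification of maximal subgroups of $\text{SL}(d,\mathbb{F}_p)$ imply that a random word of logarithmic length in $S$ lies outside any proper algebraic subgroup with overwhelming probability, and this descends modulo $p$ to the required bound on $\mu_S^{(k)}(H)$.

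The heart of the argument is the flattening step (iii). Applying the Balog--Szemer\'edi--Gowers theorem to a near-level set of $\mu_S^{(k)}$ extracts an approximate subgroup of controlled tripling constant; Helfgott's product theorem \cite{Helfgott2008} together with its generalizations \cite{BreuillardGreenTao2011,PyberSzabo2016} then forces this approximate subgroup either to be nearly all of $\text{SL}(d,\mathbb{F}_p)$ or to lie inside a proper subgroup, and (ii) rules out the latter. Hence the $\ell^2$-norm of $\mu_S^{(k)}$ strictly decreases at each convolution step until it reaches the uniform scale $|\pi_p(\Gamma)|^{-1/2}$, and combining this decay with (i) yields a spectral gap $\|T_S^{(p)}\| \leq 1 - \epsilon$ with $\epsilon$ independent of $p$.

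The main obstacle is the passage to composite $q$. For squarefree $q = p_1 \cdots p_k$ one uses the Chinese Remainder Theorem to decompose $\ell_0^2(\pi_q(\Gamma))$ and glues the prime-level gaps together with a uniform product theorem for direct products of the $\text{SL}(d,\mathbb{F}_{p_i})$; the key technical point is that all constants must be uniform in $k$. The harder case is $q = p^n$, where $\pi_{p^n}(\Gamma)$ is far from simple and $\mathbb{Z}/p^n\mathbb{Z}$ carries many ideal layers; one must prove a product theorem directly inside $\text{SL}(d,\mathbb{Z}/p^n\mathbb{Z})$ by analyzing approximate subgroups level-by-level through their images in $\text{SL}(d,\mathbb{Z}/p^j\mathbb{Z})$ and the abelian kernels of the reduction maps, which encode Lie algebra data over $\mathbb{F}_p$. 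Gluing the squarefree and prime-power cases via a final tensor-product decomposition produces super-approximation for all $q \in \mathbb{N}$.
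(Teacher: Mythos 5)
This statement is Theorem~\ref{BV2012}, which the paper does not prove: it is quoted as a black box from \cite{BourgainVarju2012}, so there is no internal proof to compare against. Judged on its own terms, your proposal is an outline of the Bourgain--Gamburd expansion machine rather than a proof: each of your three inputs (quasi-randomness, non-concentration, $\ell^2$-flattening via Balog--Szemer\'edi--Gowers and a product theorem) is itself a substantial theorem, and you do not establish any of them. For prime moduli this outline does match the known argument of Bourgain--Gamburd, but that case was already known; the entire content of the Bourgain--Varj\'u theorem is the passage to prime powers and then to arbitrary $q$, and that is exactly where your sketch has a genuine gap.

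Concretely, your plan for $q=p^n$ --- ``prove a product theorem directly inside $\text{SL}(d,\mathbb Z/p^n\mathbb Z)$ by analyzing approximate subgroups level-by-level'' --- is not how the proof goes and faces real obstructions. The congruence filtration $G(p)\supset G(p^2)\supset\cdots$ supplies a long chain of large normal subgroups, so $\text{SL}(d,\mathbb Z/p^n\mathbb Z)$ has large approximate subgroups and its quasi-randomness exponent degrades as $n$ grows; a Helfgott-type tripling statement uniform in $n$ is not available and the naive machine does not close. What Bourgain and Varj\'u actually do is replace the missing product-theorem input by a non-concentration estimate at all congruence scales, derived from the quantitative equidistribution theorem of Bourgain--Furman--Lindenstrauss--Mozes for the $\Gamma$-action on the torus $(\mathbb R/\mathbb Z)^d$ --- precisely the ingredient the introduction of this paper singles out as ``a main ingredient in the proof of Theorem~\ref{BV2012}'' when discussing the work of De Saxc\'e and He. Your proposal omits this ingredient entirely, and the gluing of the squarefree and prime-power cases (which also requires care, not just a ``tensor-product decomposition'') is asserted rather than argued. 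As written, the proposal would not yield the theorem.
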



\begin{theorem}[\cite{GolsefidyVarju2012,SalehiGolsefidy2019}]\label{GV2012-2019}
Let $\Gamma\subset\text{GL}_n(\mathbb Z)$ be a finitely generated group. Then, $\Gamma$ has super-approximation with respect to:
\begin{enumerate}
\item fixed powers of square-free integers, and
\item powers of primes,
\end{enumerate}
if and only if the connected component of the Zariski closure $\mathbb G$ of $\Gamma$ is perfect, i.e., $[\mathbb G^\circ,\mathbb G^\circ]=\mathbb G^\circ$.
\end{theorem}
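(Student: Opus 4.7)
My plan is to handle the two implications separately, with the bulk of the work going into the direction asserting that perfection implies super-approximation; the converse is a softer obstruction argument.

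For necessity, suppose $\mathbb G^\circ$ is not perfect. Then the abelianization $\mathbb G^\circ/[\mathbb G^\circ,\mathbb G^\circ]$ is a positive-dimensional commutative algebraic group, so it surjects up to isogeny onto $\mathbb G_a$ or $\mathbb G_m$. Passing to a finite-index subgroup of $\Gamma$ (which does not affect super-approximation) produces a Zariski-dense projection onto this abelianization, and reducing modulo $q$ then exhibits abelian quotients of $\pi_q(\Gamma)$ whose orders grow at least like a positive power of $q$. Cayley graphs of abelian groups on a bounded generating set cannot form an expander family---balls of intermediate radius serve as Cheeger witnesses---so super-approximation must fail along each of the claimed families of moduli.

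For sufficiency, I would apply the Bourgain-Gamburd expansion machine modulus by modulus after preliminary reductions. Pass to a Zariski-connected finite-index subgroup, and invoke Nori--Weisfeiler strong approximation to identify $\pi_q(\Gamma)$, for $q$ in a cofinite set of admissible moduli, with a standard congruence quotient of a simply connected semisimple group (possibly extended by a unipotent factor). The machine reduces a uniform spectral gap along $\{X_q\}$ to three inputs at each admissible $q$. The first is \emph{quasi-randomness}: a polynomial lower bound on the smallest dimension of a nontrivial irreducible representation of $\pi_q(\Gamma)$, obtained from Landazuri--Seitz at the semisimple level combined with a Kirillov-orbit analysis of the unipotent kernel. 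The second is \emph{non-concentration}: for some $\ell \asymp \log q$, the convolution power $\mu^{*\ell}$ assigns mass at most $|X_q|^{-\delta}$ to each proper subgroup of $\pi_q(\Gamma)$, which one deduces from an escape-from-subvarieties estimate combining Zariski density of $\Gamma$ with a classification of maximal subgroups. The third is a \emph{product theorem} in the style of Helfgott--Pyber--Szab\'o, applied inductively through the composition factors of $\pi_q(\Gamma)$.

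The hard part, and the main obstacle, is the bootstrap from primes to prime powers, and the analogous issue for composite square-free moduli. For $q = p^k$, the congruence kernel $\ker\bigl(\pi_{p^k}(\Gamma)\to\pi_p(\Gamma)\bigr)$ is nilpotent, and one must establish both a flattening/product statement and a non-concentration estimate \emph{inside} this nilpotent kernel, using only lifts of the original generators. This is precisely where the perfection hypothesis enters: $[\mathbb G^\circ,\mathbb G^\circ] = \mathbb G^\circ$ ensures that Lie brackets of lifts generate each successive graded piece of the congruence kernel, which drives an inductive flattening argument level by level. For square-free composites one must additionally coordinate the spectral data across distinct primes via a multi-scale Chinese-remainder decomposition of the random walk. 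These combined technical obstacles explain why the full theorem requires substantially more machinery than the original Bourgain--Gamburd argument for a single semisimple group mod $p$.
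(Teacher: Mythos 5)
This statement is not proved in the paper at all: it is Theorem \ref{GV2012-2019}, quoted verbatim from Salehi Golsefidy--Varj\'u and Salehi Golsefidy as an external input (the paper only \emph{uses} it, via Corollary \ref{1611}). So there is no in-paper argument to compare yours against; what you have written is a roadmap for reproving a major theorem of the literature, and it should be judged on its own terms.

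As such, it is a reasonable caricature of the actual strategy (Bourgain--Gamburd machine, strong approximation, escape from subvarieties, a product theorem), and your necessity direction is essentially the standard and correct argument. But two of your three ``inputs'' hide, or get wrong, exactly the points where the real proof is hard. First, the quasi-randomness step as you state it fails for prime-power moduli with a uniform exponent: for $q=p^k$ the kernel of $\pi_{p^k}(\Gamma)\to\pi_p(\Gamma)$ is a $p$-group, and $\pi_{p^k}(\Gamma)$ has nontrivial irreducible representations of dimension roughly $|X_q|^{c/k}$, which is not bounded below by any fixed power of $|X_q|$ as $k\to\infty$; Landazuri--Seitz plus a Kirillov-orbit count cannot repair this. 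The actual proofs replace global quasi-randomness by a multi-scale argument showing the walk is already equidistributed on cosets of deep congruence subgroups, so that only representations ``living at the top level'' need to be controlled --- you gesture at this in your final paragraph, but it contradicts the quasi-randomness input you list, and the reconciliation is the heart of the matter. Second, ``a product theorem \dots applied inductively through the composition factors'' is not an application of Helfgott--Pyber--Szab\'o but a new theorem: their results concern simple groups of Lie type of bounded rank, and extending growth to products over many primes and to groups with nontrivial unipotent radical is precisely the main content of \cite{GolsefidyVarju2012}. So the proposal is a plausible plan whose every nontrivial step is deferred; it is not a proof, and one of its stated ingredients is false as stated.
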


Theorem \ref{BV2012} and Theorem \ref{GV2012-2019} naturally lead to the so-called \emph{super-approximation Conjecture}:
\begin{conj}\label{SAC}

Let $\Gamma\subset\text{GL}_n(\mathbb Z)$ be a finitely generated group. Then, $\Gamma$ has super-approximation (without restriction on the moduli) if and only if { the Zariski closure $\mathbb G$ of $\Gamma$ has perfect identity component}.
\end{conj}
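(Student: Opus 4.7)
The plan is to approach the conjecture by combining the two existing partial results, Theorems \ref{BV2012} and \ref{GV2012-2019}, with a \emph{uniform product theorem} that works simultaneously across varying prime bases and varying prime-power exponents. The easy direction---that a perfect identity component is necessary---follows immediately from Theorem \ref{GV2012-2019}: if $[\mathbb{G}^\circ,\mathbb{G}^\circ]\neq \mathbb{G}^\circ$, then super-approximation already fails along prime powers, hence certainly fails along all of $\mathbb{N}$. So the entire task is the sufficiency direction.

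For sufficiency, I would first reduce to the case $\mathbb{G}$ connected by passing to a finite-index subgroup and a fixed coset decomposition, and then set up the Bourgain--Gamburd expansion machine directly on $\pi_q(\Gamma)$ for arbitrary $q = \prod_i p_i^{a_i}$. The three classical inputs required are (i) non-concentration of the random walk in proper algebraic subgroups, (ii) a girth / high-multiplicity estimate on convolution powers of the uniform measure on $\pi_q(S)$, and (iii) a product theorem inside $\pi_q(\Gamma)$. Inputs (i) and (ii) are essentially available at arbitrary moduli from refinements of the techniques of Salehi Golsefidy--Varj\'u underlying Theorem \ref{GV2012-2019}. The real crux is (iii): one needs a product theorem uniform in $q$ across all positive integers, not only along the square-free direction (as in Theorem \ref{BV2012} for $\text{SL}(d,\mathbb Z)$) or the prime-power direction (Theorem \ref{GV2012-2019}).

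My proposed route to the uniform product theorem is a two-scale argument. First, invoke Breuillard--Green--Tao / Pyber--Szab\'o prime by prime to obtain growth on each simple factor of $\pi_{p_i}(\Gamma)$. Second, decompose $\Gamma/\Gamma(q)$ via the Chinese remainder theorem and handle the nilpotent layers coming from prime powers using Nori-type lifts together with a quantitative Larsen--Pink style inequality. Third, glue the per-prime estimates by a \textbf{transversality} lemma asserting that concentration of the walk in a proper algebraic subgroup at one prime cannot be synchronized with concentration at another prime, in the spirit of the decoupling arguments of Bourgain--Varj\'u.

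The hardest step, and the reason this conjecture remains open at the time of writing, is controlling the interaction between the semisimple and the unipotent parts of $\mathbb{G}$ uniformly across all moduli. When the unipotent radical is nontrivial, $\pi_q(\Gamma)$ acquires an unbounded tower of nilpotent normal subgroups whose structure depends sensitively on the exponents $a_i$, and the semisimple-case escape estimates from Theorem \ref{BV2012} do not obviously transfer. What appears to be needed are new quantitative non-concentration bounds on these nilpotent layers that degrade at worst polynomially in $\log q$. If that obstruction is overcome, the standard $\ell^2$-flattening argument then delivers $\|T_S^{(q)}\|\leq 1-\epsilon$ with $\epsilon>0$ independent of $q$, which is precisely super-approximation.
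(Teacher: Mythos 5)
The statement you are addressing is Conjecture \ref{SAC}, which the paper explicitly leaves open: its own contribution is only the much weaker logarithmic-diameter bound of Theorem \ref{main-theorem} for two special families ($\text{SA}_d$ and products of $\text{SL}_{d_i}$), offered as \emph{evidence} for the conjecture. So there is no proof in the paper to compare against, and your text is not a proof either; it is a research program. The necessity direction is fine and does follow from Theorem \ref{GV2012-2019} as you say. But every load-bearing step of the sufficiency direction is an unproved assertion: the ``uniform product theorem'' across arbitrary moduli $q=\prod_i p_i^{a_i}$, the ``transversality lemma'' decoupling concentration at different primes, and the quantitative non-concentration bounds on the nilpotent layers are not consequences of the cited results --- they are precisely the open core of the problem, as you yourself concede when you write that this is ``the reason this conjecture remains open at the time of writing.'' In particular, invoking Breuillard--Green--Tao / Pyber--Szab\'o prime by prime and then ``gluing'' cannot work as stated: the losses in the product theorem at each prime do not combine in a way that survives a product over an unbounded number of primes with unbounded exponents, which is exactly why Bourgain--Varj\'u needed the Bourgain--Furman--Lindenstrauss--Mozes input for $\text{SL}_d$ rather than a soft decoupling, and why even for $\text{SL}_d$ no one has an argument for a general perfect $\mathbb G$ at arbitrary moduli.

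If your goal is to contribute something provable in the direction of Conjecture \ref{SAC}, note that the paper sidesteps the spectral gap entirely: it extracts only the diameter consequence, by combining the already-known expansion results (Corollary \ref{1611}) with a bounded-generation statement for $\text{SL}_d(\mathbb Z/q\mathbb Z)$ (Proposition \ref{key-proposition}) and an explicit conjugation identity in the semidirect product. That is a genuinely different, and far more modest, target than the one your proposal aims at; as written, your proposal establishes neither.
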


The square-free case of Theorem \ref{GV2012-2019} has applications to \emph{affine sieves}:  Given a group $\Gamma<\text{GL}_n(\mathbb Z)$ { whose Zariski closure has perfect identity component}, a vector $v\in \mathbb Z^n$, and an integral polynomial $f$ of $n$ variables, the square-free case of Theorem \ref{GV2012-2019} implies that the set $\{f(\gamma\cdot v): \gamma\in \Gamma\}$ contains infinitely many almost-primes \cite{BGS10, GS13}.  In some special cases of $\Gamma, v, f$, a much stronger statement known as the \emph{local-global principle} holds.  See \cite{BK10, BK14a, BK14, Zh16, Zh18, FSZ19}.  A crucial ingredient of the proof of the local-global principle in these settings is the super-approximation property for the relevant group without any moduli restriction.  We believe a full proof of Conjecture \ref{SAC} would be very interesting and potentially has applications to other arithmetic problems. \par

An easy but important consequence of the super-approximation property is that expander families $(X_q)_{q\in \mathcal C}$ have \emph{logarithmic diameter}. This means that there exists a constant $C(S)>0$ depending only on the generating set $S$ and in particular is independent of $q$, such that
$$
\text{Diam}(X_q)\leq C(S)\log|X_q|,\quad \forall q\in \mathcal C.
$$
Our graphs are Cayley graphs of subgroups of $\text{GL}(d,\mathbb Z/q\mathbb Z)$, so in particular, $|X_q|\leq q^{d^2}$, so every such expander family satisfies the diameter bound
\begin{align}\label{1550}
\text{Diam}(X_q)\leq C(S)\log(q),\quad \forall q\in \mathcal C.
\end{align}

Theorem \ref{BV2012} and Theorem \ref{GV2012-2019} thus lead to

\begin{cor} \label{1611}With notations above, the inequality \eqref{1550} holds if \begin{enumerate}
\item ${\normalfont{\text{Zcl}}} (\Gamma)={{\normalfont\text{SL}}}_d,\mathcal C=N$,
\item ${\normalfont{\text{Zcl}}} (\Gamma)\text{ has perfect identity component,     }  \mathcal C=\{\text{fixed powers of square free numbers}\}$
\end{enumerate}
\end{cor}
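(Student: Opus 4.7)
The plan is to derive Corollary \ref{1611} directly from Theorems \ref{BV2012} and \ref{GV2012-2019} by invoking the standard passage from a uniform spectral gap to a logarithmic diameter bound, a passage already flagged in the paragraph preceding the corollary. In case (1), Theorem \ref{BV2012} supplies some $\epsilon = \epsilon(S) > 0$ with $\|T_S^{(q)}\|\leq 1-\epsilon$ for every $q\in\mathbb N$; in case (2), the first clause of Theorem \ref{GV2012-2019} gives the same inequality for every $q$ in the set $\mathcal{C}$ of fixed powers of square-free integers. Thus the entire content of the corollary is the translation of this uniform spectral gap into a bound on $\text{Diam}(X_q)$.

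For the spectral-to-diameter step I would run the usual convolution-power argument. Let $\mu = \tfrac{1}{|S_q|}\sum_{s\in S_q}\delta_s$ be the uniform probability measure on $S_q=\pi_q(S)$, let $u_q = \tfrac{1}{|X_q|}\mathbf{1}$ be the uniform distribution on $X_q$, and write $\mu = u_q + \mu_0$ with $\mu_0\in\ell^2_0(X_q)$. Since $\mu\ast u_q = u_q$, a short induction gives $\mu^{*k}-u_q = (T_S^{(q)})^{k-1}\mu_0$, so
\[
\bigl\|\mu^{*k}-u_q\bigr\|_\infty \;\leq\; \bigl\|\mu^{*k}-u_q\bigr\|_2 \;\leq\; (1-\epsilon)^{k-1}\|\mu_0\|_2 \;\leq\; (1-\epsilon)^{k-1},
\]
using the trivial estimate $\|\mu_0\|_2\le\|\mu\|_2 \le 1$. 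Choosing $k = C(S)\log|X_q|$ with $C(S)$ large enough in terms of $\epsilon$ makes the right-hand side strictly less than $1/|X_q|$, forcing $\mu^{*k}(y)>0$ for every $y\in X_q$. Since $\mu^{*k}(y)>0$ means $y$ is expressible as a product of $k$ elements of $S_q$, every vertex lies within graph-distance $k$ of the identity, and $\text{Diam}(X_q)\leq 2k$.

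Combining with the trivial bound $|X_q|\leq |\text{GL}_n(\mathbb Z/q\mathbb Z)|\leq q^{n^2}$ already recorded in the text, we get $\text{Diam}(X_q)\leq 2C(S)\,n^{2}\log q$, which is precisely \eqref{1550} after absorbing the factor $n^2$ into the constant. The main (and essentially the only nontrivial) step is the convolution-power argument in the second paragraph; this is very standard, so I do not anticipate a genuine obstacle, with the heavy lifting done entirely by the cited super-approximation theorems.
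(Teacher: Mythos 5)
Your proposal is correct and matches the paper's (implicit) argument exactly: the paper derives the corollary by citing Theorems \ref{BV2012} and \ref{GV2012-2019} for the spectral gap and then invoking the standard fact, stated in the preceding paragraph together with the bound $|X_q|\leq q^{d^2}$, that expander families have logarithmic diameter. Your convolution-power computation simply fills in the routine details that the paper leaves as well known.
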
 

The main result of this paper is to prove logarithmic diameter bounds for new families of groups for which super-approximation is not yet known, to serve as further evidence of the super-approximation for these groups. Let us denote by $\text{SA}_d$ the special affine group, i.e., $\text{SA}_d(R)=\text{SL}_d(R)\ltimes R^d$ for any commutative ring $R$.

\begin{theorem}
\label{main-theorem}
Let $S\subset\text{GL}_n(\mathbb Z)$ be a finite symmetric subset and let $\Gamma=\la S\ra$. If the Zariski closure $\mathbb G$ of $\Gamma$ is $\text{SA}_d$ or $\text{SL}_{d_1}\times \text{SL}_{d_2}\times\cdots\times \text{SL}_{d_k}$, then $\forall q\in \mathbb N$,
\begin{equation}\label{inequality1}
\text{Diam}(X_q)\leq C(S)\log(q),
\end{equation}
where $C(S)>0$ is a constant depending on $S$ but is independent of $q$.
\end{theorem}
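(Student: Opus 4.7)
\emph{Strategy.} The plan is to leverage Theorem \ref{BV2012} as a black box giving $O(\log q)$ diameter on each simple factor of $\mathbb G$, and then promote this to the full group by analyzing the kernels of the canonical projections to those factors. The key advantage is that a Bourgain--Varj\'u word $w_A$ of length $O(\log q)$ exists for any prescribed matrix $A$ in a Zariski-dense subgroup of some $\text{SL}_d$, so conjugation by any such element costs only $O(\log q)$ letters, which is the basic ``free move'' used repeatedly.

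\emph{Case $\mathbb G = \text{SA}_d$.} Writing $\gamma = (A_\gamma, v_\gamma) \in \text{SL}_d(\mathbb Z) \ltimes \mathbb Z^d$, the linear projection $\pi\colon \Gamma \to \text{SL}_d(\mathbb Z)$ has Zariski-dense image, so Theorem \ref{BV2012} implies any target matrix $A_0$ in $\pi_q(\pi(\Gamma))$ is a word of length $O(\log q)$ in $\pi_q(\pi(S))$. Given a full target $(A_0, v_0) \in \pi_q(\Gamma)$, take such a word $w$ with $\pi(w) \equiv A_0 \pmod q$; then $w^{-1}(A_0,v_0) = (I,u)$ is a pure translation in $T_q := \pi_q(\Gamma) \cap (\mathbb Z/q)^d$, and it remains to express any element of $T_q$ as an $O(\log q)$-length word. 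The lattice $\Lambda := \Gamma \cap \mathbb Z^d$ is $\pi(\Gamma)$-invariant, and Zariski density of $\Gamma$ in $\text{SA}_d$ together with irreducibility of the $\text{SL}_d$-action on $\mathbb Q^d$ forces $\Lambda$ to have full rank in $\mathbb Z^d$ (otherwise $\Gamma$ would lie in an affine conjugate of $\text{SL}_d$, contradicting the Zariski closure). Fix a nonzero $\lambda_0 \in \Lambda$ represented by a constant-length word $w_0$ in $S$. Then for any $A \in \pi_q(\pi(\Gamma))$ the conjugate $w_A w_0 w_A^{-1}$ realizes the translation $(I, A\lambda_0)$ in $O(\log q)$ letters. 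Because $\text{SL}_d(\mathbb Z/q)$ acts transitively on primitive vectors mod $q$ and every element of $(\mathbb Z/q)^d$ is the sum of $O(1)$ primitive vectors, concatenation yields any element of $T_q$ (up to a bounded-index subgroup coming from $[\mathbb Z^d : \Lambda]$, the exceptional prime-power components of which are handled separately via the Chinese Remainder Theorem) as an $O(\log q)$-length word.

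\emph{Case $\mathbb G = \text{SL}_{d_1} \times \cdots \times \text{SL}_{d_k}$.} I would argue by induction on $k$. By Theorem \ref{BV2012} applied to each Zariski-dense projection $\pi_i(\Gamma) \subset \text{SL}_{d_i}(\mathbb Z)$, each factor has $O(\log q)$ diameter in the induced generating set. Hitting the first-factor target via Bourgain--Varj\'u reduces the problem to expressing arbitrary elements of the kernel $K_q := \pi_q(\Gamma) \cap (\{I\} \times \prod_{i \geq 2} \text{SL}_{d_i}(\mathbb Z/q))$ as $O(\log q)$-length words. Goursat's lemma combined with the Matthews--Vaserstein--Weisfeiler strong approximation theorem shows $\pi_q(\Gamma) = \prod_i \text{SL}_{d_i}(\mathbb Z/q)$ for $q$ coprime to a finite set of bad primes, with the exceptional components treated via CRT. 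To realize elements of $K_q$ as short words, construct a constant-length seed $c_0 \in \Gamma$ with trivial first-factor projection and non-trivial image in the residual factors---for example via commutators $[\gamma_1, \gamma_2]$ where $\pi_1(\gamma_1), \pi_1(\gamma_2)$ are chosen from a common maximal torus of $\text{SL}_{d_1}$---and then use Bourgain--Varj\'u on the remaining factors to realize the entire conjugacy class of $c_0$ in $K_q$ via $O(\log q)$-length conjugations. A bounded-width result for conjugacy classes in finite simple groups of Lie type of bounded rank expresses any element of $K_q$ as a product of $O(1)$ such conjugates, yielding an $O(\log q)$-length word, and the induction on $k$ proceeds.

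\emph{Main obstacle.} The technical crux in both cases is the construction of a suitable ``seed'' element in the kernel of a projection---$\lambda_0$ for $\text{SA}_d$, $c_0$ for the product case---together with the verification that its Bourgain--Varj\'u-generated orbit under conjugation covers a bounded-index subgroup of the kernel. The $\text{SA}_d$ case is comparatively straightforward because the kernel is abelian and the $\text{SL}_d$-orbit structure on $\mathbb A^d$ is classical. The product case is harder because the kernel is non-abelian, the seed must be built from genuinely independent commuting elements in the first factor (a delicate choice, since the naive commutator $[\gamma_1, \gamma_1^n]$ vanishes), and one must invoke bounded-width theorems for conjugacy classes in the relevant finite simple groups to keep the number of required factors bounded.
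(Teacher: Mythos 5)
Your opening reduction --- hit the target's image in one factor with a Bourgain--Varj\'u word and reduce to the kernel of that projection --- is the same first move the paper makes. But both of your cases then rest on an integral ``seed'' element of that kernel, and such an element need not exist; this is a genuine gap, not a technicality. In the $\text{SA}_d$ case you take $\lambda_0\neq 0$ in $\Lambda=\Gamma\cap\mathbb Z^d$ and justify $\Lambda\neq 0$ by asserting that otherwise $\Gamma$ would lie in an affine conjugate of $\text{SL}_d$. That implication fails for thin groups: if $A,B$ generate a free Zariski-dense subgroup of $\text{SL}_d(\mathbb Z)$ and $u,w\in\mathbb Z^d$ are not of the form $Av_0-v_0,\ Bv_0-v_0$ for a common $v_0$, then $\langle (A,u),(B,w)\rangle$ is free, maps isomorphically onto $\langle A,B\rangle$ under the linear projection (so meets $\mathbb Z^d$ trivially), yet is Zariski dense in $\text{SA}_d$, because every \emph{algebraic} cocycle $\text{SL}_d\to\mathbb G_a^d$ is a coboundary while this integral one is not. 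The same problem defeats the product case: take $\Gamma$ to be the graph of an abstract isomorphism between free Zariski-dense subgroups of $\text{SL}_{d_1}(\mathbb Z)$ and $\text{SL}_{d_2}(\mathbb Z)$ (with $d_1\neq d_2$, say, so the graph is automatically Zariski dense in the product). Then $\text{pr}_1$ is injective on $\Gamma$, so your commutator $[\gamma_1,\gamma_2]$ with commuting first projections is forced to be the identity, and no nontrivial $c_0$ exists.

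The paper circumvents exactly this, and that is the content your proposal is missing. Instead of an integral kernel element, it produces, for each $q$, an element of $\pi_q(\Gamma)$ at distance $O(\log q)$ whose ``unwanted'' coordinates are merely congruent to the identity modulo a fixed power of the radical $q_0$ of $q$; this uses Theorem \ref{GV2012-2019} (super-approximation with moduli restricted to fixed powers of square-free integers, applicable because $\mathbb G$ is perfect), not Theorem \ref{BV2012}. It then upgrades ``$\equiv I$ modulo a fixed power of $q_0$'' to ``$= I$ modulo $q$'' via Proposition \ref{key-proposition}, a bounded generation statement: the congruence subgroup $G_q(q_2)$ of $\text{SL}_d(\mathbb Z/q\mathbb Z)$ is covered by $N(d)$ conjugates of two fixed elements satisfying explicit congruence conditions, the conjugators being short words by Theorem \ref{BV2012} applied to the factor. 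Your appeal to bounded-width theorems for conjugacy classes in finite simple groups cannot substitute for this step: the hard moduli here are high prime powers, and $\text{SL}_d(\mathbb Z/p^k\mathbb Z)$ is far from simple for large $k$, so controlling normal closures in these congruence quotients in boundedly many steps is precisely what Proposition \ref{key-proposition} must (and does) supply. Without both the square-free-power super-approximation input and such a bounded generation result, the argument does not close.
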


In the proof of Theorem \ref{main-theorem} we crucially use {\ccr Corollary \ref{1611} that depends on} Theorem \ref{BV2012} and Theorem \ref{GV2012-2019}.  Another important ingredient is a bounded generation result for $\text{SL}_d$ (Proposition \ref{key-proposition}).  \par

A general perfect group can be written as a semidirect product of a semisimple group and a unipotent group, and a semisimple group can be further decomposed as an almost product of simple groups.  One then naturally wonders if properties of some simple groups can be extended to groups built upon these groups.   While extending the spectral gap property remains a technical challenge for us, we manage to show how to extend the logarithmic diameter property, by illustrating two family of groups $\text{SL}_n^m$ and $\text{SA}_d$.  These two families represent two typical situations of building perfect groups, that is, taking product of simple groups and taking semidirect product of a semisimple group and a unipotent group.  We are only focused on groups whose semisimple parts are (a product of) $\text{SL}_d$, because Theorem \ref{BV2012} is the only known result on super-approximation without moduli restriction.  Very recently,  De Saxc\'e and He \cite{He19} generalises Theorem A of Bourgain-Furman-Mozes-Lindenstrass \cite{BFML}, which is a main ingredient in the proof of Theorem \ref{BV2012}.  It is promising that Theorem \ref{BV2012} can be extended to a general simple group, combined with which our method may be adapted to obtain logarithmic diameter bounds for a general perfect group.  

\par 

\subsection*{Acknowledegements}\

We thank the anonymous referee for the numerous helpful corrections and suggestions on this work.

\section{Proof of the main theorem}

\subsection{Notations}

{ Let $\bf G<\bf GL_n$ and $G=\bf G(\mathbb Z)$.}
The map
$$
\pi_{q}:
\mathbb Z\to \mathbb Z/{q}\mathbb Z,\quad
x\mapsto x\ (\text{mod } {q})
$$
induces a group homomorphpism $\text{GL}(d,\mathbb Z)\to\text{GL}(d,\mathbb Z/q\mathbb Z)$, which we still denote by $\pi_q$. Hence, we obtain homomorphisms
$${\pi_q: G\to {\bf{G}}( \mathbb Z/q \mathbb Z)}
$$
Let $G({{q}})=G\cap \ker(\pi_{{q}})$ denote the congruence subgroup of level ${q}$ of $G(\mathbb Z)$, and let $ G_q=G/G(q)$. For two integers $q_2|q_1$, let { $\varphi_{{{q}}_1,{{q}}_2}$ be the canonical projection map $G_{{{q}}_1}\to G_{{{q}}_2}$}. For a group $\Gamma< G$, we let $\Gamma(q)$=$G(q)\cap \Gamma$. We still denote by $\pi_{{q}}:\Gamma\to\Gamma/\Gamma({q})$ the restriction to $\Gamma$, and the canonical projection map $\Gamma_{{{q}}_1}\to \Gamma_{{{q}}_2}$ by   $\varphi_{{{q}}_1,{{q}}_2}$.  Finally, we write  {${\Gamma_{q}}=\pi_{{q}}(\Gamma)$}.  \par
In $G=\text{SL}(d,\mathbb Z)\ltimes\mathbb Z^d$, we denote by $\theta:G\to \text{SL}_d(\mathbb Z)$ the quotient homomorphism, and we define the (set-theoretic) projection
$$
\tau:G\to \mathbb Z^d,\quad
g\mapsto {g{\vec{0}}},
$$
where $\vec{0}$ is the zero vector of $\mathbb Z^d$.
With this choice, { we may represent $g\in G$ by the pair $(\theta(g), \tau(g))$, and parametrize $G$ as the set $\text{SL}_d(\mathbb Z)\times\mathbb Z^d$ equipped with the product law}
$$
(h_1,u_1)\cdot (h_2,u_2)=(h_1h_2,h_1u_2+u_2),\quad (h_i,u_i)\in \text{SL}_d(\mathbb Z)\times\mathbb Z^d,
$$
where $\text{SL}_d(\mathbb Z)$ acts on $\mathbb Z^d$ by the usual linear action. 

For a prime $p$ and an integer $q$, the notation $p^n||q$ means $p^n|q$ but $p^{n+1}\nmid q$. We also write $\text{ord}_p(q)=n$.

\subsection{{A bounded generation result for $\text{SL}_d(\mathbb Z/q\mathbb Z)$}}
In this section we take $G=\text{SL}_d(\mathbb Z)$. { We take $L\geq 2$ to be a sufficiently large integer.  The exact value of $L$ is to be determined at \eqref{1503}}.\par 

Let us write ${{q}}=\prod_{i=1}^{r_0} p_i^{\alpha_i}$ for the prime decomposition of ${q}$. We assume that $\min_i \alpha_i\geq 4(L-1)$.  

Put
$$
{{q}}_0=\prod_{i=1}^{r_0} p_i,\quad
{{q}}_1=\prod_{i=1}^{r_0} p_i^{L},\quad
{{{q}}_2=\prod_{i=1}^{r_0} p_i^{4(L-1)}},\quad
$$
Our {arguments} rely on the following Proposition.

\begin{prop}\label{key-proposition} {Suppose $L$ is sufficiently large, so that for every $p$, $\mathbb Z/ p^L\mathbb Z$ has at least $d$ invertible elements.  }There exists $N(d)$ such that the following holds. Let $g_0=(a_{jk})_{jk}\in G_q$ be {any} element satisfying the following conditions:
{
\begin{enumerate}
\item All elements in the main diagonal of $\varphi_{q,q_1}(g_0)$ are invertible and distinct in $\mathbb Z/q_1 \mathbb Z$.
\item For any lower triangular entry $a_{jk},j>k$, $p_i^{L-1}||a_{jk}$ for $1\leq i\leq r_0$.  
\item For any {upper} triangular entry $a_{jk},j<k$, {$p_i^{2(L-1)}||a_{jk}$} for $1\leq i\leq r_0$.  
\end{enumerate}}

{\ccr \noindent Similarly, let $g_0'=(a'_{jk})_{jk}\in G_q$ be {any} element satisfying the following conditions:
{
\begin{enumerate}
\item All elements in the main diagonal of $\varphi_{q,q_1}(g_0')$ are invertible and distinct in $\mathbb Z/q_1 \mathbb Z$.
\item For any upper triangular entry $a'_{jk},j<k$, $p_i^{L-1}||a'_{jk}$ for $1\leq i\leq r_0$.  
\item For any lower triangular entry $a'_{jk},j>k$, {$p_i^{2(L-1)}||a'_{jk}$} for $1\leq i\leq r_0$.  
\end{enumerate}} }

\ccr {Then,
$$
\textstyle
{G_q(q_2)\subset\prod_{N(d)} \{gg_0^{\pm 1}g^{-1}, gg_0'^{\pm 1}g^{-1} | g\in G_q(q_0^{L-1})\}.}
$$
In other words, every element $h\in G_q(q_2)$ may be written as a product of $N(d)$ elements of the form 
$gg_0^{\pm}g^{-1}$ or $gg_0'^{\pm}g^{-1}$.}
\end{prop}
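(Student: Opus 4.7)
The plan is an iterated commutator construction: build up elementary-type elements at successively deeper levels of the congruence filtration, then exploit scalar freedom in the construction to match any $h\in G_q(q_2)$ exactly.

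First, I do a level-one commutator calculation. For $g = I + q_0^{L-1} Y \in G_q(q_0^{L-1})$, hypotheses (2) and (3) yield $g_0 \equiv D \pmod{q_0^{L-1}}$ with $D = \mathrm{diag}(a_{11}, \ldots, a_{dd})$, and a direct expansion gives
$$
[g, g_0] = g g_0 g^{-1} g_0^{-1} \equiv I + q_0^{L-1}\,[Y, D]\,D^{-1} \pmod{q_0^{2(L-1)}}.
$$
Note that $[g, g_0] = (g g_0 g^{-1})\cdot g_0^{-1}$ is a product of just two conjugates from our generating set (with $g_0^{-1}$ conjugated by $I \in G_q(q_0^{L-1})$). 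Choosing $Y = s\,e_{jk}$ for $j\neq k$ (with $e_{jk}$ the standard matrix unit and $s\in\mathbb Z$) gives $[Y,D]D^{-1} = s\,(1 - a_{jj}/a_{kk})\,e_{jk}$, and the distinctness/invertibility in hypothesis (1) makes $u_{jk} := 1 - a_{jj}/a_{kk}$ a unit modulo $q_1$. Hence every ``level-one root element'' $I + q_0^{L-1}\,t\,e_{jk} \pmod{q_0^{2(L-1)}}$ with $t\in\mathbb Z/q$ arbitrary is realised by two conjugate factors, and the parallel construction with $g_0'$ yields the transposed companion family.

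Second, I iterate. For $a_i = I + q_0^{m_i(L-1)} P_i + O(q_0^{(m_i+1)(L-1)})$ the standard expansion
$$
[a_1, a_2] \equiv I + q_0^{(m_1+m_2)(L-1)}\,[P_1, P_2] \pmod{q_0^{(m_1+m_2+1)(L-1)}}
$$
composes a level-$m_1$ and a level-$m_2$ element into a level-$(m_1+m_2)$ element (with $[P_1,P_2]$ the Lie bracket appearing in the leading term). Applied twice to the level-one root elements above, this produces level-four elements $I + q_0^{4(L-1)}\,M + O(q_0^{5(L-1)})$ where $M$ is a four-fold iterated Lie bracket of matrix units. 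Since $\mathfrak{sl}_d$ is Lie-generated in bounded depth by $\{e_{jk}: j\neq k\}$ (via $[e_{jk},e_{kl}] = e_{jl}$ and $[e_{jk},e_{kj}] = e_{jj}-e_{kk}$), and the overall scalar $s_1 s_2 s_3 s_4$ in the iterated construction can be set to any prescribed $t\in\mathbb Z/q$ by the base-level choice of one $s_i$, every level-four ``elementary'' element $I + q_0^{4(L-1)}\,t\,V \pmod{q_0^{5(L-1)}}$ (for $V$ a standard $\mathfrak{sl}_d$-basis element) is expressible as a product of $O_d(1)$ conjugates (the word length doubles at each commutator step).

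Finally, for $h\in G_q(q_2)$, I write $h = I + q_0^{4(L-1)} X$ with $X$ trace-zero and decompose $X$ on the standard $\mathfrak{sl}_d$-basis; this expresses $h$ modulo $q_0^{5(L-1)}$ as a product of $O(d^2)$ level-four elementary elements. \emph{The main obstacle} is upgrading this mod-$q_0^{5(L-1)}$ equality to an exact equality in $G_q$ while keeping the total conjugate count bounded independently of $q$: a naive bootstrap applied to the residual at successively deeper levels would give word length $O(\log q)$. The resolution uses the key observation that although the leading term of each level-four commutator depends only on the product $s_1 s_2 s_3 s_4$ of base-level scalars, the residual in $G_q(q_0^{5(L-1)})$ depends on each $s_i$ individually (and on whether $g_0$ or $g_0'$ is used); varying the individual scalars with product fixed sweeps the residual over enough of $G_q(q_0^{5(L-1)})$ to absorb the necessary corrections. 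Exploiting this scalar freedom, together with the $g_0'$-companion family, yields the exact decomposition of $h$ into $N(d) = O_d(1)$ conjugates.
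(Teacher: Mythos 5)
Your architecture is genuinely different from the paper's (which conjugates $g_0$ into lower-triangular form via a multivariable Hensel lifting, realizes unipotent triangular matrices as two conjugates, and finishes by Gaussian elimination), but it has two concrete gaps. First, hypothesis (1) does \emph{not} make $u_{jk}=1-a_{jj}/a_{kk}$ a unit modulo $q_1$: distinctness of $a_{jj}$ and $a_{kk}$ in $\mathbb Z/p_i^{L}\mathbb Z$ only gives $\text{ord}_{p_i}(a_{jj}-a_{kk})\leq L-1$, and this valuation can equal $L-1$ simultaneously for every pair $(j,k)$ (e.g.\ all diagonal entries congruent mod $p^{L-1}$ but distinct mod $p^{L}$). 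Your level-one root elements therefore only realize $t$ in the ideal generated by $u_{jk}$, and under iterated brackets these defects multiply: after four levels the achievable leading coefficients can be restricted to multiples of $q_0^{4(L-1)}$, so the level-four elements need not span $G_q(q_0^{4(L-1)})/G_q(q_0^{5(L-1)})$ and the whole filtration bookkeeping (four commutator levels landing exactly at $q_0^{4(L-1)}$) collapses. This is exactly why the paper divides each equation by $p^{\text{ord}_p(a_{ii}-a_{11})}$ before applying Hensel's lemma, and why the exponent in $q_2=q_0^{4(L-1)}$ is what it is.

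Second, and more seriously, the step you yourself flag as the main obstacle --- upgrading the factorization from a congruence mod $q_0^{5(L-1)}$ to an exact identity in $G_q$ with $N(d)$ independent of $q$ --- is not proved. Since $\min_i\alpha_i\geq 4(L-1)$ is the only constraint, the residual group $G_q(q_0^{5(L-1)})$ has order growing with $q$, and the claim that ``varying the individual scalars with product fixed sweeps the residual'' is an assertion of surjectivity of a word map that is essentially the content of the proposition itself; nothing in the proposal establishes it. The standard way to make such a step rigorous, and the one the paper uses twice (Steps 1 and 2), is a several-variable Hensel argument: one exhibits a parametrized word whose value is correct modulo a fixed power of $q_0$ and whose Jacobian in the parameters is invertible modulo $q_0$, so that the approximate solution lifts to an exact one modulo $q$ without increasing the word length. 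Without identifying such parameters for your iterated-commutator word and verifying the nondegeneracy of the corresponding Jacobian, the bounded word length cannot be concluded.
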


\begin{proof} We first prove the proposition in the special case that $q=p^r$ in Step 1- Step 5.  In Step 6 we prove the general case. 
\medskip

{\bf Step 1.} we claim that we can conjugate the matrix $g_0$ to a matrix of the form
$$
{\ccr g_1}=\mat{\lambda_1&0&\cdots&0\\b_{21}&\lambda_2&\cdots &0\\\vdots&\vdots&\ddots&\vdots\\b_{d1}&b_{d2}&\cdots&\lambda_d},\quad
$$
with $\{\lambda_i: 1\leq i\leq d\}$ are distinct $\text{mod } p^{L}$, and $p^{L-1}||b_{jk}$ for $j>k$. 
Indeed, first we want to find an element
$$
x=\mat{1&x_2&\cdots&x_d\\0 &1&\cdots&0\\\vdots&\vdots&\ddots&\vdots\\0&0&\cdots&1}\in {\ccr G_{p^r}({\text{mod }} p^{L-1})}$$
such that
$$
(xAx^{-1})_{1j}\equiv 0\ (\text{mod } p^r),\quad \forall 2\leq j\leq d.
$$
A computation shows that 
$$
xg_1x^{-1}=\mat{a_{11}+x_2a_{21}+\hdots+x_da_{d1}&F_2&\cdots& F_d\\a_{21}& -x_2a_{21}+a_{22}&\cdots& -x_da_{21}+a_{2d}\\\vdots&\vdots&\ddots&\vdots \\a_{d1}&-x_2a_{d1}+a_{d2}&\cdots& -x_da_{d1}+a_{dd}},
$$
where 
\begin{align*}
F_2&=-x_2(x_2a_{21}+\cdots x_da_{d1})+(a_{22}-a_{11})x_2+x_3a_{32}+\hdots+x_da_{d2}+a_{12},\\
&\ \ \vdots\\
F_d&=-x_d(x_2a_{21}+\hdots+x_da_{d1})+x_2a_{2d}+\hdots+x_d(a_{dd}-a_{11})+a_{1d}.
\end{align*}
We want to solve $x_2,\,\hdots,\, x_d$ with {\ccr$x_2\equiv\cdots\equiv x_d\equiv 0({\text{mod }} p^{L-1})$} for the system of equations \begin{equation}\label{system1}
\begin{aligned}
F_2(x_2,\cdots, x_d)&\equiv 0~ (\text{mod } p^r)\\ &\ \ \vdots\\ F_d(x_2,\cdots, x_d)&\equiv 0~ (\text{mod } p^r)\end{aligned}
\end{equation}
 
By the assumption for $g_0$, $a_{11}, a_{22}, \cdots, a_{dd}$ are distinct in $\mathbb Z/p^r\mathbb Z$, so we have {\ccr for all ${2}\leq i\leq d$}, $$\text{ord}_p(a_{ii}-a_{11})\leq L-1.$$ We also have $\text{ord}_p(a_{jk})=L-1, j>k$ and $p^{2(L-1)} | a_{jk}$ for $j<k$.  Therefore, for each $2\leq i\leq d$, we can let $\tilde F_i=F_i/p^{\text{ord}_{p}(a_{ii}-a_{11})}$, so that the coefficient for the linear term $x_i$ for $\tilde F_i$ is coprime to $p$.  To solve  \eqref{system1}, it suffices to solve 
\begin{equation}
\begin{aligned}\label{system2}
\tilde F_2(x_2,\cdots,x_d)&\equiv 0~ (\text{mod } p^s)\\ &\ \ \vdots\\ \tilde F_d(x_2,\cdots, x_d)&\equiv 0~ (\text{mod } p^s)\end{aligned}
\end{equation}
with {\ccr$x_2\equiv \cdots\equiv x_d\equiv 0({\text{mod }} p^{L-1})$ for every $s\geq L-1$. }

We see that if $s=L-1$, the system of equations \eqref{system1} admits a solution $x_2\equiv \hdots\equiv x_d\equiv 0\ (\text{mod } p^{L-1})$.  Moreover, the Jacobian
$$
\left(\frac{ \partial \tilde {F_{i}}}{\partial x_j}\right)_{x_2=\hdots= x_d=0}= \mat{ \frac{a_{22}-a_{11}}{p^{\text{ord}_p(a_{22}-a_{11})}}&\frac{a_{23}}{p^{\text{ord}_p(a_{22}-a_{11})}}&\cdots&\frac{a_{2d}}{p^{\text{ord}_p(a_{22}-a_{11})}}\\\frac{a_{32}}{p^{\text{ord}_p(a_{33}-a_{11})}}&\frac{a_{33}-a_{11}}{p^{\text{ord}_p(a_{33}-a_{11})}}&\cdots &\frac{a_{3_d}}{p^{\text{ord}_p(a_{33}-a_{11})}}\\\vdots&\vdots&\ddots&\vdots\\\frac{a_{d2}}{p^{\text{ord}_p(a_{dd}-a_{11})}}&\frac{a_{d3}}{p^{\text{ord}_p(a_{dd}-a_{11})}}&\cdots&\frac{a_{dd}-a_{11}}{p^{\text{ord}_p(a_{dd}-a_{11})}} },
$$
Reduced mod $p$, the above matrix is lower triangular and invertible in $\text{Mat}_d(\mathbb Z/p\mathbb Z)$.  {\ccr Applying Hensel's Lemma in several variables,  the solution $(0,\hdots,0)$ to \eqref{system2} at level $s=L-1$ can be lifted to an arbitrary level,} which in turn implies the solvability of \eqref{system1}.  

Since $x_2,x_3,\cdots, x_n\equiv 0({\text{mod }} p^{L-1})$, the matrix $xg_1x^{-1}$ satisfies the same conditions as $g_1$ in the Statement of Proposition \ref{key-proposition}.  Using the same method as above, We can conjugate $xg_0x^{-1}$ by a matrix $x'$ of the form 
$$
x'=\mat{1&0&0&\cdots&0\\0 &1&x_3'&\cdots&x_d'\\0&0&1&\ddots&\vdots\\\vdots&\vdots&\ddots&1&0\\ 0&0&\cdots &0&1}\in G_{p^r}(p^{L-1})
$$
to create an element with all upper triangular entries on the first two rows congruent to $0 (\text{mod } p^{r})$.  Iterating the above step for another $(d-2)$ times, we can create a lower triangular matrix $g_1$ as desired.

\medskip

{\bf Step 2.} We show that we can write a general unipotent matrix 
$$
g_2=\mat{1&0&\cdots &0\\c_{21}&1&\cdots&0\\\vdots&\vdots&\ddots&\vdots \\c_{d1}&c_{d2}&\cdots&1},\quad
\text{where}\quad
c_{jk}\in p^{2(L-1)}\mathbb Z/p^r\mathbb Z,\ 1\leq k<j\leq d,
$$
as a product of a conjugate of $g_1$, and a conjugate of $g_1^{-1}$.  

First, for any
$$
g_2'=\begin{pmatrix}
\lambda_1&0&\cdots &0\\c_{21}'&\lambda_2&\cdots&0\\\vdots&\vdots&\ddots&\vdots \\c_{d1}'&c_{d2}'&\cdots&\lambda_d\end{pmatrix}
$$
with $\ccr c_{ij}'\equiv b_{ij}({\text{mod }} p^{2(L-1)}), i\geq j$, 
we can find a lower triangular matrix $y=(y_{ij})_{1\leq i, j\leq d}$ with $1$'s on the diagonal and {\ccr $y_{ij}\equiv 0({\text{mod }} p^{L-1})$ for $1\leq j<i\leq d$}, such that
\begin{equation}
\label{2208}
yg_1 y^{-1}=g_2'\ (\text{mod } p^r).
\end{equation}
Indeed, the matrix equation \eqref{2208} gives $\frac{d(d-1)}{2}$ equations regarding the lower triangular entries of $g_2'$.  We observe that the entry $c'_{j+l,j}$ from the $l^{\text{th}}$ lower subdiagonal is given by an integral polynomial involving entries from the first $l-1$ lower subdiagonals from $g_2'$ and entries from $g_1$.  On the first lower subdiagonal entries, the equation \eqref{2208} leads to
\begin{align*}
(\lambda_1-\lambda_2)y_{21}+b_{21}&=c_{21}' \ (\text{mod } p^r)\\
&~~\vdots\\
(\lambda_{d-1}-\lambda_d)y_{d,d-1}+b_{d,d-1}&=c_{d,d-1}'\ (\text{mod } p^r),
\end{align*}
for which we can find solutions $y_{21},\hdots, y_{d,d-1}\in  p^{L-1}\mathbb Z/p^r\mathbb Z$, because the $\lambda_i$ ($1\leq i\leq d$) are distinct $(\text{mod } p^L)$, and $p^{2(L-1)}$ divides $b_{21}-c'_{21},\hdots,b_{d,d-1}-c'_{d,d-1}$.  Assuming all entries from the first $l-1$ lower subdiagonals of $y$ have been solved, on the $l^{\text{th}}$ diagonal, the equation \eqref{2208} leads to 
$$\ccr
(\lambda_{j}-\lambda_{j+l})y_{j+l,j}+b_{j+l,j}-c'_{j+l,j}+\left(\begin{array}{c}\text{An integral quadratic form of $y_{j+s,j}$ with $s<l$ }\\
\text{from $y$ and lower diagonal entries from $g_1$}\end{array}\right)=0,
$$
{\ccr which also admits solutions for $y_{1+l,1},\hdots, y_{d,d-l}\in p^{L-1}\mathbb Z/p^r\mathbb Z$}. By induction this gives the solvability of \eqref{2208}.

We can then write
$$
\mat{1&0&\cdots &0\\c_{21}&1&\cdots&0\\\vdots&\vdots&\ddots&\vdots \\c_{d1}&c_{d2}&\cdots&1}
=\mat{\lambda_1&0&\cdots &0\\0&\lambda_2&\cdots&0\\\vdots&\vdots&\ddots&\vdots \\0&0&\cdots&\lambda_d}^{-1}\cdot\mat{\lambda_1&0&\cdots &0\\\lambda_2c_{21}&\lambda_2&\cdots&0\\\vdots&\vdots&\ddots&\vdots \\\lambda_dc_{d1}&\lambda_d {\ccr c_{d2}}&\cdots&\lambda_d},
$$
\medskip

{\ccr {\bf Step 3.} Running the same arguments as in the previous two steps for $g_0'$, we can write a general upper triangular matrix 
$g_3=\mat{1&f_{12}&\cdots &f_{1n}\\0&1&\cdots&f_{2n}\\\vdots&\vdots&\ddots&\vdots \\0&0&\cdots&1},\quad
\text{where}\quad
f_{jk}\in p^{2(L-1)}\mathbb Z/p^r\mathbb Z,\ 1\leq j<k\leq d,
$
as a product of {\ccr a bounded number (depending only on $d$) of } conjugates of $g_0'$ and ${g_0'^{-1}}$.
}

{\bf Step 4.} Let $\{\vec{e_i}\}$ be the standard basis on $(\mathbb Z/p^r\mathbb Z)^d$. 

Let $H_{k\ell}^\lambda$ be the scaling matrix such that
$$
H_{k\ell}^\lambda\vec{e_{i}}=\vec{e_{i}}\quad ({\ccr i\neq k,l}),\quad
{H_{k\ell}\vec{e_k}=\lambda\vec{e_k}},\quad
H_{k\ell}^\lambda\vec{e_\ell}=\lambda^{-1}\vec{e_\ell}.
$$
We show any $H_{ij}^\lambda, $ with $\ccr \lambda\equiv 1(\text{mod } p^{4(L-1)})$ can be written as a product of (conjugates of) matrices produced from Step 2 and Step 3.  This simply follows from that any 2 by 2 matrix { of determinant 1 }that is congruent to $I(\text{mod } p^{4(L-1)})$ can be written as a product 
$$
\mat{1&x\\0&1}\cdot \mat{1&0\\y&1}\cdot \mat{1&z\\0&1}\cdot \mat{1&0\\w&1}.
$$
with $p^{2(L-1)}|x,y,z,w$. 

\medskip

{\bf Step 5.} We can finally prove the proposition for the case $q=p^r$.  For a given $\gamma\in G_{p^r}(p^{4(L-1)})$, we work in reverse order.  We multiply $\gamma$ on the left by matrices produced from previous steps to reach the identity matrix.   

\medskip

We first left multiply $\gamma$ by some unipotent matrix $C_1$ of the form 
$$
C_1=\mat{1&0&\cdots &0\\ z_2&1&\cdots&0\\\vdots&\vdots&\ddots&\vdots \\ z_{d}&0&\cdots&1}, \quad p^{2(L-1)}|z_2,z_3,\hdots,z_d,
$$
which can be produced from Step 2, 
so that $(C_1 \gamma)_{j,1}=0$ for $2\leq j\leq d$.  Similarly, We left multiply $C_1\gamma$ by some matrix $C_2$, which is of the form 
$$
C_2=\mat{1&0&\cdots &0\\ 0&1&\cdots&0\\\vdots&\vdots&\ddots&\vdots \\ 0&z_d'&\cdots&1}, \quad  p^{2(L-1)}| z_3',\hdots,z_d', 
$$
and can be produced from Step 2, so that
$$
(C_2C_1 \gamma)_{2,j}\equiv 0\ (\text{mod } p^r),\quad {\ccr 3\leq j\leq n}.
$$
Iterating this for another $d-2$ steps, we can find $C_3,\,\hdots,\, C_d$, such that 
$$
C_dC_{d-1}\cdots C_1\gamma=M,
$$
where $M$ is upper triangular.  We then multiply by $d-1$ scaling matrices $H_1,\,\hdots,\,H_{d-1}$ produced from Step 4, so that $H_d\cdots H_1 M$ is unipotent and upper-triangular; this matrix can be produced from Step 3.  We can then multiply $H_d\cdots H_1 M$ by the inverse of this conjugate to retrieve the identity.  \par
\medskip

{{\bf Step 6.} Now we deduce the proposition for the general case $q=\prod_{i=1}^{r_0}p_i^{\alpha_i}$ from the special case $q=p^r$.  Let $g\in G_{q}(q_2)$.  For each $p_i^{\alpha_i}||q$, we have $g(\text{mod }p_i^{\alpha_i})\in G_{p_i^{\alpha_i}}^{p^{4(L-1)}}$.  Applying the conclusion of the proposition for prime power moduli,  there exist $h_{p_i^{\alpha_i},1},h_{p_i^{\alpha_i},2},\cdots, h_{p_i^{\alpha_i},N(d)}\in \{g_0^{\pm}(\text{mod }p_i^{\alpha_i}), g_0'^{\pm}(\text{mod }p_i^{\alpha_i})\} $, and $g_{p_i^{\alpha_i},1},\cdots, g_{p_i^{\alpha_i},N(d)}\in G_{p_i^{\alpha_i}}(p^{L-1})$ such that

\begin{align}\label{0216}
g_{p_i^{\alpha_i},1}\cdot h_{p_i^{\alpha_i},1}\cdot g_{p_i^{\alpha_i},1}^{-1}\cdots g_{p_i^{\alpha_i},N(d)}\cdot h_{p_i^{\alpha_i},N(d)}\cdot g_{p_i^{\alpha_i},N(d)}^{-1}\equiv g(\text{mod }p_i^{\alpha_i})
\end{align}

Moreover, the proof in Step 1 - Step 5 actually gaurantees that for each $1\leq j\leq N(d)$, we can choose these $h_{p_i^{\alpha_i},j}$ that come from reduction of an common element $h_j\in \{g_0^{\pm}, g_0'^{\pm}\}$, that is, $h_{p_i^{\alpha_i},j}=h_j(\text{mod }p_i^{\alpha_i})$ for each $ 1\leq i\leq r_0$. 
 
The Strong-Approximation Property for $\text{SL}_d$ (\cite{Nori1987}) says that 
$$\text{SL}_d(\mathbb Z/q\mathbb Z)\cong \prod_{i=1}^{r_0}\text{SL}_d(\mathbb Z/p_i^{\alpha_i}\mathbb Z) $$

Under this identification,  $G_{q}(q_2)\cong  \prod_{i=1}^{r_0}G_{p_i^{\alpha_i}}({p^{4(L-1)}})$.  This shows for each $1\leq j\leq N(d)$, we can find $g_j\in G_q(q_2)$, such that $g_j\equiv g_{p_i^{\alpha_i},j}(\text{mod }p_{i}^{\alpha_i})$ for any $1\leq i\leq r_0$. Then it follows from \eqref{0216} that 
$$g_{1}\cdot h_{1}\cdot g_{1}^{-1}\cdots g_{N(d)}\cdot h_{N(d)}\cdot g_{N(d)}^{-1}\equiv g( \text{mod }q )$$
 }

\end{proof}

\subsection{Proof of the main theorem} In this section we take $G=\mathbb G(\mathbb Z)$. Since $\text{Zcl}(\Gamma)=\text{SL}_{d_1}\times\text{SL}_{d_2}\times\cdots\times \text{SL}_{d_k}$ or $\text{SA}_d$, the Strong Approximation Property for $\Gamma$ (\cite{Nori1987}) says that under the inclusion 
$$ i: \Gamma\hookrightarrow \mathbb G(\hat{\mathbb Z}), $$
the closure $\overline{i(\Gamma)}$ of $i(\Gamma)$ is an open and thus cofinite subgroup of $\mathbb G(\hat{\mathbb Z})$.  
{ This implies that there exists an integer $\mathcal B=\prod_{i=1}^{n_{\mathcal B}}{p_i^{\beta_i}}$ such that for any $\mathcal B|q$, we have 
$$\Gamma(\mathcal B)/\Gamma(q)= G(\mathcal B)/G(q).$$
We let $L_0\geq 2$ be the smallest integer such that for each prime $p$, $\mathbb Z/p^{L_0}\mathbb Z$ has at least $d$ distinct elements.  Then we take 
\begin{align}\label{1503}
L=\max\{L_0, \beta_i+1: 1\leq i\leq n_{\mathcal B}\}. 
\end{align}
}
We let $\text{d}_{S, q}(\cdot,\cdot)$ denote the distance function on $X_q$.  We only need to consider $q\in \mathbb N_{L}:= \{q\in\mathbb N: q_0^{5L-5}|q\}$: Suppose for the set $\mathbb N_{L}$, the inequality \eqref{inequality1} holds for some constant $c_L$. Then for a general $q$, we can lift any two points $p_1,p_2\in X_q$ to two points $\tilde p_1, \tilde p_2$ on $X_{q^{5L-5}}$.  By assumption, $\tilde p_1$ and $\tilde p_2$ is connected by a path with length $\leq c_L\log {q^{5L-5}}= c_L(5L-5) \log q$. This path projects down to a path in $X_q$ connecting $p_1$ and $p_2$ and of length $\leq c_L(5L-5) \log q$.  Therefore, $\text{d}_{S,q}(p_1,p_2)\leq c_L(5L-5)\log q$.   \par
{
Our next reduction is that, given $q\in N_L$,  it suffices to prove for any $p_1\equiv I(q_0^{5L-5}) (\text{mod }q)$, 
$$\text{d}_{S, q}(I, p_1)\leq c_L'\log q $$
for some $c_L'>0$.  This is because by Corollary \ref{1611}, Case (2), for any $p_1\in X_q$, one can find $p_1'\in X_q\cap G_q(q_0^{5L-5})$, such that  $$d_{S,q}(p_1,p_1')\leq c_L''\log q$$ for some $c_L''>0$.  
}

 We denote $\mathbb N_L^{*}:=\{\prod_{i=1}^k{p_i^{5L-5}}: p_i\text{'s are distinct primes}\}$.\par

\medskip 

\noindent {\bf Case 1:} $\normalfont\text{Zcl}(\Gamma)=\text{SA}_d$. \par

Let $c_{1}$ be the implied constant by Corollary \ref{1611}, Case (1), for the group $\theta(\Gamma)$ with the generating set $\theta(S)$, and let $c_{2,L}$ be the implied constant by Theorem \ref{GV2012-2019} for $\Gamma$ with moduli restricted to the set $\mathbb N_{L}^{*}$.


In the Cayley graph $X_q$, {\ccr Corollary \ref{1611}, Case (2)} allows us to choose three vertices $(T_1,v_1), (T_1', v_1')$ and $(T_2,v_2)$ satisfying the following conditions

\begin{equation*}
\begin{cases}
&T_1 \text{ satisfies the congruence condition of }g_0 \text{ in Proposition \ref{key-proposition}},\\
& v_1\equiv 0 (\text{mod } q_0^{5(L-1)}),\\
& \text{d}_{S,q}((I,\vec{0}), (T_1,v_1))\leq c_{2,L}\log q,
\medskip\\
&{\ccr T_1' \text{ satisfies the congruence condition of }g_0' \text{ in Proposition \ref{key-proposition}}},\\
&{\ccr v_1'\equiv 0 (\text{mod } q_0^{5(L-1)}),}\\
&{\ccr \text{d}_{S,q}((I,\vec{0}), (T_1',v_1'))\leq c_{2,L}\log q, }
\medskip \\
&T_2\equiv I(\text{mod } q_0^{4L-4})\\
& v_2\equiv (q_0^{L-1},0,\cdots,0)^t (\text{mod } q_0^L),\\
& \text{d}_{S,q}((I,\vec{0}), (T_2,v_2))\leq c_{2,L}\log q.
\end{cases}
\end{equation*}



By Proposition \ref{key-proposition}, there exists $S_1,\cdots, S_{N(d)}\in G_q(q_0^{2(L-1)})$, $(\mathfrak T_i, \mathfrak v_i)=(T_1,v_1)\text{ or }(T_1', v_1')$, and $\epsilon_i=\pm 1, 1\leq i\leq N(d)$, such that
$$\prod_{1\leq i\leq N(d)}S_i \mathfrak T_i^{\epsilon_i} S_i^{-1}=T_{2}^{-1}$$ 
Corollary \ref{1611}, Case (1) implies for each $1\leq i\leq N(d)$ we can find $w_i\in (\mathbb Z/q\mathbb Z)^d$, such that 
$$\text{d}_{S,q}((I,\vec{0}),(S_i, w_i))\leq c_1 \log q.$$

Then we have $$\left(\prod_{1\leq i\leq N(d)}(S_i,w_i)(\mathfrak T_i, \mathfrak v_i)^{\epsilon_i}(S_i,w_i)^{-1} \right)\cdot (T_2,v_2)=(I, v_0),$$
 for some {\ccr{$v_0\equiv (p^{L-1},0,\cdots,0) (\text{mod } p^L)$ }} for any $p|q$.  
 Therefore, we have 
 $$\text{d}_{S,q}((I,\vec{0}), (I, v_0)))\leq (N(d)(2c_1+c_{2,L})+c_{2,L})\log q.$$
 
The following identity, although elementary, is crucial in our argument for this case: 
\begin{align}\label{key}(T,v)^{-1}\cdot(I,v_0)\cdot (T,v)=(I, T^{-1}v_0).
\end{align}

%
It is fairly straightforward to show that 
\begin{align*}
G_{q}(q_0^{4(L-1)})\cdot v_0+G_{q}(q_0^{4(L-1)})\cdot v_0\supset \{v\in \mathbb Z/q_0^r\mathbb Z: v\equiv (0,\cdots,0) ({\text{mod }}p^{5(L-1)})\}.
\end{align*}
Then for any $v\in (\mathbb Z/q\mathbb Z)^d$ with $v\equiv (0,\cdots,0)({\text{mod }}q_0^{5(L-1)})$, we have 

\begin{align}\label{1534}
\text{d}_{S,q}((I,\vec{0}), (I, v)))\leq  ((4+4N(d))c_1+(2+2N(d))c_{2,L})\log q.  
\end{align}

Now for any $(T,v)\in \text{SL}_d(\mathbb Z/q\mathbb Z)\ltimes (\mathbb Z/q\mathbb Z)^d$,  applying Proposition \ref{key-proposition} again we can find $(S_i', v_i'), 1\leq i\leq N(d)$ with $d_{S,q}((I,\vec{0}), (S_i,w_i))\leq c_1\log q$, such that 
$$\left(\prod_{1\leq i\leq N(d)}(S_i',w_i')(\mathfrak T_i', \mathfrak v_i')^{\epsilon_i}(S_i,w_i)^{-1} \right)\cdot (T,v)=(I, v'),$$
with $v'\equiv(0,\cdots,0)({\text{mod }}p_0^{5(L-1)})$.  \eqref{1534} then implies
 $$\text{d}_{S,q}((I,\vec{0}),(T,v))\leq ((4+6N(d))c_1+(2+3N(d))c_{2,L})\log q.$$

\medskip

{\bf{Case 2:}} {$\text{Zcl}(\Lambda)=\text{SL}_{d_1}\times\text{SL}_{d_2}\times \cdots\times \text{SL}_{d_k} $}.\par
We only work with the case $k=2$ and $d_1=d_2=d$.  The idea for the general case is the same.  Let $\text{pr}_1$ ($\text{pr}_2$, respectively) be the projection from $\text{SL}_d\times \text{SL}_d$ to the first (second, respectively) factor.  Let $c_{3}$ be the implied constant by Corollary \ref{1611}, Case (1) for $\text{pr}_1(\Gamma)$ with the generating set $\text{pr}_1(S)$, $c_{4}$ be the implied constant by Corollary \ref{1611}, Case (1) for $\text{pr}_2(\Gamma)$ with the generating set $\text{pr}_2(S)$, and let $c_{5,L}$ be the implied constant by Corollary \ref{1611}, Case (2) for $\Gamma$ with moluli restricted to the set $\mathbb N_L^{*}$.  We want to show that a general element $(P,Q)\in G_{q}(q_0^{5(L-1)})$ has logarithmic distance to $(I,I)$. \par

Applying Corollary \ref{1611}, Case (2), we can find {$(P_1,Q_1),(P_1', Q_1'), (P_2, Q_2), (P_2', Q_2')$}, such that 

\begin{equation*}
\begin{cases}
&P_1, Q_2 \text{ satisfy the congruence conditions for } g_0 \text{ in Proposition \ref{key-proposition}},\\
&{P_1', Q_2' \text{ satisfy the congruence conditions for } g_0' \text{ in Proposition \ref{key-proposition}},}\\
& Q_1, Q_1', P_2, P_2' \equiv I(\text{mod } q_0^{5L-5}),\\
& \text{d}_{S,q}((I,I),(P_1,Q_1))\leq c_{5,L}\log q,\\
& \text{d}_{S,q}((I,I),(P_1',Q_1'))\leq c_{5,L}\log q,\\
&  \text{d}_{S, q}((I,I),(P_2,Q_2))\leq c_{5,L}\log q,\\
&  \text{d}_{S, q}((I,I),(P_2',Q_2'))\leq c_{5,L}\log q.
\end{cases}
\end{equation*}

{Next, we apply Proposition \ref{key-proposition} to $g_0= P_1$ and $g_0'=P_1'$.  We can find $\tilde{P_i}\in \text{pr}_1(G)(q_0^{L-1}), P_{(i)}\in\{P_1^{\pm 1}, P_1'^{\pm 1}\}$ for $1\leq i\leq N(d)$, such that 
\begin{align}\label{0021}
\prod_{1\leq i\leq N(d)}\tilde{P_i} P_{(i)} \tilde{P_i}^{-1} = P_2^{-1}
\end{align}
Similarly, we can find $\tilde{P_i}^*\in\text{pr}_1(G)(q_0^{L-1}), P_{(i)}^*\in\{P_1^{\pm 1}, P_1'^{\pm 1}\}$ for $1\leq i\leq N(d)$, such that 
\begin{align}\label{0022}
\prod_{1\leq i\leq N(d)}\tilde{P_i}^* P_{(i)}^* {\tilde{P_i}}^{*-1} = P_2'^{-1}
\end{align}

For each $P_{(i)},  P_{(i)}^*$, we assign $Q_{(i)}, Q_{(i)}^*$ so that $$(P_{(i)}, Q_{(i)}), (P_{(i)}^*, Q_{(i)}^*) \in \{(P_1, Q_1), (P_1', Q_1'), (P_1^{-1}, Q_1^{-1}), (P_1'^{-1}, Q_1'^{-1}) \} $$

As in the previous case, using Corollary \ref{1611}, Case (1), we can find $(\tilde{P_i},\tilde{Q_i}), (\tilde{P_i}^*,\tilde{Q_i}^*)\in G_q(q_0^{L-1})$, such that 
$$\text{d}_{S,q}((I,I),(\tilde{P_i},\tilde{Q_i}))\leq c_3 \log q.$$

Then \eqref{0021} and \eqref{0022} give 
$$\left(\prod_{1\leq i\leq N(d)}(\tilde{P_i},\tilde{Q_i})(P_{(i)},Q_{(i)})(\tilde{P_i}^{-1}, \tilde{Q_i}^{-1})\right)\cdot (P_2,Q_2)=(I,Q_3),$$
and 
$$\left(\prod_{1\leq i\leq N(d)}(\tilde{P_i}^*,\tilde{Q_i}^*)(P_{(i)}^*,Q_{(i)}^*)(\tilde{P_i}^{*-1}, \tilde{Q_i}^{*-1})\right)\cdot (P_2',Q_2')=(I,Q_3'),$$
for some $Q_3$ and $Q_3'$.  The matrices $Q_3$, $Q_3'$ satisfy the congruence conditions for $g_0$, $g_0'$ in Proposition \ref{key-proposition}, respectively.  This is because $Q_1, Q_1'\equiv I(q_0^{5(L-1)})$, and  $Q_2, Q_2'$ satisfy the congruence conditions for $g_0, g_0'$ in Proposition \ref{key-proposition}, respectively. \par
We thus have 
\begin{equation}\label{1429}
\text{d}_{S,q}((I,I),(I,Q_3))\leq N(d)(2c_3+c_{5,L})\log q,
\end{equation}
and
\begin{equation}\label{1430}
\text{d}_{S,q}((I,I),(I,Q_3'))\leq N(d)(2c_3+c_{5,L})\log q,
\end{equation}

Applying Proposition \ref{key-proposition} to $g_0=Q_3, g_0'=Q_3'$, since $Q\equiv I(\text{mod } q_0^{5L-5})$, we can find $Q_i^{\star}\in \text{pr}_2(G)(q_0^{L-1})$, and a choice $Q_{(i)}\in\{Q_3^{\pm}, Q_3'^{\pm}\}$, such that
$$\prod_{i=1}^{N(d)} Q_i^\star Q_{(i)} { Q_i^\star}^{-1}=Q.$$

Furthermore, applying Corollary \ref{1611}, Case (2) to $\text{pr}_2(\Gamma)$, we can find ${{P_i}^{\star}}\in \text{pr}_1(G)(q_0^{L-1})$, $1\leq i\leq N(d)$, so that 
\begin{equation}\label{1431}
\text{d}_{S, q}((I,I),({{P_i}^\star},{{Q_i}^\star}))\leq c_4\log q.
\end{equation}
We thus have 
\begin{equation}\label{1433}\ccr \prod_{i=1}^{N(d)} ({{P_i}^\star}, {{Q_i}^\star})(I, Q_{(i)})({{P_i}^\star}, {{Q_i}^\star})^{-1}=(I, Q ).
 \end{equation}
Collecting \eqref{1429}, \eqref{1431}, \eqref{1433}, we obtain 
 $$\text{d}_{S,q}((I,I), (I,Q))\leq (2N(d)^2c_3+2N(d)c_4+N(d)^2c_{5,L})\log q.$$
 }
Similarly,

 $$\text{d}_{S,q}((I,I), (P, I))\leq (2N(d)^2c_4+2N(d)c_3+N(d)^2c_{5,L})\log q.$$

Finally, since $(P,Q)=(P, I)(I, Q)$, we have 

\begin{align*} &\text{d}_{S,q}((I,I),(P,Q))\\&\ccr \leq ((2N(d)^2+2N(d))c_3+(2N(d)^2+2N(d))c_4+2N(d)^2c_{5,L})\log q.\end{align*}

\bibliographystyle{plain}
\bibliography{logbound}

\end{document}